\def\E{{\mathcal E}}
\def\M{{\mathcal M}}
\def\P{{\mathcal P}}
\def\Prob{{\mathbb{P}}}
\newtheorem{theorem}{Theorem}[section]
\newtheorem{corollary}[theorem]{Corollary}
\author{Stephen Chestnut\footnote{Department of Applied Mathematics, University of Colorado, Boulder, CO 80309-0526, The United States}\and Manuel E. Lladser\footnotemark[1] \footnote{Corresponding author e-mail: manuel.lladser@colorado.edu} \thanks{Both authors have been partially supported by NSF grant \#DMS-0805950.}}
\title{Occupancy distributions in Markov chains via Doeblin's ergodicity coefficient}
\begin{document}
\maketitle
\begin{abstract}
We apply Doeblin's ergodicity coefficient as a computational tool to approximate the occupancy distribution of a set of states in a homogeneous but possibly non-stationary finite Markov chain. Our approximation is based on new properties satisfied by this coefficient, which allow us to approximate a chain of duration $n$ by independent and short-lived realizations of an auxiliary homogeneous Markov chain of duration of order $\ln(n)$. Our approximation may be particularly useful when exact calculations via first-step methods or transfer matrices are impractical, and asymptotic approximations may not be yet reliable. Our findings may find applications to pattern problems in Markovian and non-Markovian sequences that are treatable via embedding techniques. 
 \end{abstract}

\section{Introduction.}
\label{sec:in}

In what follows, $S$ is a given finite set and $T\subset S$ a certain non-empty subset of states. For a fixed integer $n\ge1$, consider a first-order homogeneous Markov chain $X=(X_t)_{0\le t\le n}$ with initial distribution $\mu:S\to[0,1]$ and probability transition matrix $p:S\times S\to[0,1]$. We identify complex-valued functions defined over $S$ and $S\times S$ as row vectors and matrices, respectively. In particular, the distribution of $X_t$ is given by the vector $\mu p^t$.

Our object of interest is the \emph{occupancy distribution of $T$} i.e. the distribution of the random variable:
\begin{equation}
\label{ide:def Sn}
T_n=\sum_{t=1}^n[\![X_t\in T]\!],
\end{equation}
where $[\![\cdot]\!]$ denotes the Iverson's bracket. Random variables of this sort are common in assessing the frequency statistics of patterns in random sequences, which typically model text or genomic sequences. Although various probabilistic and analytic techniques have been used for this purpose, the \emph{Markov chain embedding technique} is among the most versatile ones. This technique seems to have originated in the works of Gerber and Li~\cite{GerLi81}, Biggins and Cannings~\cite{BigCan87}, and Bender and Kochman~\cite{BenKoc93}. It usually consists in embedding a random sequence into the state space of a suitable finite automaton that is informative of the pattern of interest, and it has been completely systematized for \emph{regular patterns} i.e. patterns described by a regular expression, and \emph{Markovian models} of random sequences~\cite{NicSalFla02,Nic03}. In addition, the technique has also shown some promise for assessing regular patterns in \emph{non-Markovian sequences} i.e. sequences with an arbitrary correlation structure~\cite{Lla08}.

All the complexity associated with determining or approximating the distribution of $T_n$ is due to the distributional dependence between the consecutive states visited by the chain $X$. There are various ways---some more ad hoc and others more systematic---to pinpoint this distribution. For small values of $n$, exact calculations are possible via one-step methods~\cite{Dur99} or transfer matrices~\cite{FlaSed09}. Furthermore, transfer matrices lead to Normal approximations for large values of $n$ e.g. as shown in~\cite{NicSalFla02} for frequency statistics of regular patterns under Markovian models. On the other hand, for stationary chains, Poisson~\cite{BarHolJan92} and compound Poisson approximations~\cite{Erh99} have been proposed when $T$ is a \emph{rare set} i.e. the stationary measure of $T$ is small. A specialized instance of these approximations is the P\'olya-Aeppli distribution which occurs as the limiting distribution of frequency statistics of rare words under stationary Markov models~\cite{RoqSch07}.

Our main motivation is to approximate the distribution of $T_n$ in the intermediate regime where $n$ is too large for exact calculations but too small to rely on asymptotic approximations, when $X$ is possibly non-stationary. Our approach relies on a novel probabilistic interpretation and use of the so called \emph{Doeblin's ergodicity coefficient} associated with $p$~\cite{Doe37}, which is defined as:
\begin{equation}
\label{def:Doeblin}
\alpha(p)\stackrel{def}{=}\sum_{j}\min_{i}p(i,j).
\end{equation}

The above motivation is far from artificial! For instance, extensive research is being performed to understand the evolution of complex but short RNA sequences from simpler but functional RNA sequences~\cite{KenLlaYarKni08,KenLlaWuZhaYarDeSKni10}. In contexts like this, the pitfall of the Normal approximation of $T_n$ is the slow rate of convergence of order $n^{-1/2}$. On the other hand, the stationary assumption of the aforementioned Poisson approximations is unrealistic in the context of the Markov chain embedding technique, even if the background model of a genomic sequence is Markovian and stationary. For example, for regular patterns, the initial distribution of the embedded process is concentrated in a few states (of an exponentially large state space) associated with the unique initial state of the (minimal) $k$-th order automaton that recognizes the pattern of interest~\cite{Lla06c}.

%%%%
%%%%
\subsection{\bf Ergodicity coefficients of Markov chains.}
\label{subsec:coefficients}
%%%%
%%%%

This section finishes the Introduction with a brief discussion about ergodicity coefficients and the historical developments surrounding the characterization of weak-ergodicity of inhomogenous Markov chains.

In what follows we denote the set of all probability transition matrices over the state space $S$ as $\P$. The set of all stochastic matrices with identical rows is denoted $\E$; in particular, $\E\subset\P$. We refer to matrices in $\E$ as i.i.d. models because a homogeneous Markov chain with a probability transition matrix in this set is just a sequence of independent and identically distributed (i.i.d.) $S$-valued random variables.

Broadly speaking, an \emph{ergodicity coefficient} is any continuous function $\gamma:\P\to[0,1]$. Such function is said \emph{proper} when $\gamma(p)=1$ if and only if $p\in\E$. Clearly, Doeblin's coefficient as defined in (\ref{def:Doeblin}) is proper. Other ergodicity coefficients found in the literature are:
\begin{eqnarray*}
\gamma_1(p)&\stackrel{def}{=}&\max_j\,\min_i p(i,j);\\
\gamma_2(p)&\stackrel{def}{=}&\min_{i,j}\sum_s\min\{p(i,s),p(j,s)\}=1-\frac{1}{2}\cdot\max_{i,j}\sum_s|p(i,s)-p(j,s)|;\\
\gamma_3(p)&\stackrel{def}{=}&1-\max_s\,\max_{i,j}|p(i,s)-p(j,s)|.
\end{eqnarray*}
Only the last two of these are proper and $\gamma_2$ is called \emph{Markov's ergodicity coefficient}.

Ergodicity coefficients have been proposed for a range of purposes such as to analyze the contractive property of a stochastic matrix~\cite{Mar06} and bound its non-Perron-Froebenius eigenvalues~\cite{Sen93}. However, they have mostly been used to analyze the asymptotic behavior of non-homogenous Markov chains~\cite{Doe37,Haj58,Sen73b}. This entails understanding the asymptotic behavior of products of the form $\prod_{k=m}^n p_k$, with $m\le n$, for a given sequence $(p_k)_{k\ge0}\subset\P$. Such a sequence is said \emph{weakly ergodic} if for all $m\ge0$ and $i,j,s\in S$ the following applies:
\begin{equation}
\label{def:weakerg}
\lim_{n\to\infty}\left|\big(\prod\limits_{k=m}^np_k\big)(i,s)-\big(\prod\limits_{k=m}^np_k\big)(j,s)\right|=0.
\end{equation}
The following condition, known as Markov's theorem~\cite{Sen73a}, is sufficient for weak ergodicity:
\begin{equation}
\label{thm:Markov}
\sum\limits_{k=0}^\infty \gamma_1(p_k)=+\infty.
\end{equation}
This condition is only sufficient in great part because $\gamma_1$ is not proper~\cite{Sen73b}.

In more probabilistic terms, consider a first-order Markov chain $Y=(Y_k)_{k\ge0}$ with state space $S$ such that $\Prob[Y_k=s\mid Y_{k-1},\ldots,Y_0]=p_k(Y_{k-1},s)$, for each $k\ge1$. The sequence $(p_k)_{k\ge0}$ is weakly ergodic if and only if any two independent realizations of $Y$ meet infinitely often on a same state, with probability one. This characterization is due to Doeblin and appeared without proof in the report~\cite{Doe37}. The way in which Doeblin proved this result is matter of speculation and it was lost with his death in World War II (see~\cite{Sen73b} for the historical developments). Furthermore, Doeblin's report remained unnoticed for almost two decades. During this period, the following condition was proved to be both necessary and sufficient for weak ergodicity~\cite{Haj58}:
\begin{equation}
\label{thm:Hajnal}
\begin{array}{l}
\hbox{there\, exists\, a\, strictly\, increasing\, sequence\, of\, positive}\\
\hbox{integers $(n_k)_{k\ge0}$ such that: $\sum\limits_{k=0}^\infty\gamma_2\big(\prod\limits_{i=n_k}^{n_{k+1}-1}p_i\big)=+\infty$.}
\end{array}
\end{equation}
In contrast, Doeblin's characterization of weak ergodicity is the following~\cite{Doe37}:
\begin{equation}
\label{thm:Doeblin}
\begin{array}{l}
\hbox{there\, exists\, a\, strictly\, increasing\, sequence\, of\, positive}\\
\hbox{integers $(n_k)_{k\ge0}$ such that:\,\, $\sum\limits_{k=0}^\infty\alpha\big(\prod\limits_{i=n_k}^{n_{k+1}-1}p_i\big)=+\infty$.}
\end{array}
\end{equation}

Since $\gamma_1(p)\le\alpha(p)\le\gamma_2(p)$, for each $p\in\P$, the sufficient condition in (\ref{thm:Markov}) is a special instance of the conditions in (\ref{thm:Hajnal}) and (\ref{thm:Doeblin}). Though nobody knows how Doeblin proved that conditions (\ref{def:weakerg}) and (\ref{thm:Doeblin}) are equivalent, Seneta ventured in~\cite{Sen73b} a possible proof, that relies on the following two facts, valid for any sequence $(p_k)_{k\ge0}\subset\P$:
\begin{itemize}
\item[(a)] $\Big(1-\gamma_3\big(\prod\limits_{k=0}^np_k\big)\Big)\le\prod\limits_{k=0}^n\big(1-\gamma_2(p_k)\big)$, for all $n\ge0$;
\item[(b)] if $\sum\limits_{k=0}^\infty\alpha\big(p_k\big)=+\infty$ then $\sum\limits_{k=0}^\infty\gamma_1\big(p_k\big)=+\infty$.
\end{itemize}

\hfill\\
\noindent{\bf Paper overlook and organization.} Our paper is mostly about Doeblin's ergodicity coefficient, which we encountered---by accident---while aiming at accurate but low-to-moderate complexity approximations of occupancy distributions in homogenous Markov chains. Here we mostly state and prove new properties about Doeblin's coefficient which we would have never explored otherwise. The more detailed implications of these properties to approximate occupancy distributions will be part of a follow up publication based on the M.S. thesis~\cite{Che10}.

In \S\ref{sec:Doeblin} we demonstrate new properties about Doeblin's coefficient which allow us to provide a new and more elementary proof of Doeblin's characterization of weak-ergodicity (see \S\ref{subsec:Doeblin proof}). In \S\ref{sec:occupancy}, we relate Doeblin's coefficient to a decomposition of the chain into several independent realizations of an auxiliary chain. This leads to a (hopefully) refreshing explanation of the strong-ergodicity of irreducible and aperiodic Markov chains (see \S\ref{subsec:strong ergodicity}). Furthermore, the decomposition allows us to parse (with high probability) the trajectory of a Markov chain of duration $n$ into short-lived realizations of an auxiliary chain of duration of order $\ln(n)$ (see \S\ref{subsec:occupancy}). We exploit this feature to propose new approximations for occupancy distributions based on Doeblin's coefficient, which we compare against Normal and Poisson approximations in a numerical example.

%%%
%%%
\section{A candidate for Doeblin's missing proof.}
\label{sec:Doeblin}
%%%
%%%

Recall that Doeblin's ergodicity coefficient associated with a $p\in\P$ is the quantity:
\[\alpha(p)=\sum_{j}\min_{i}p(i,j).\]
Because $\alpha(\cdot)$ is a proper ergodicity coefficient, $\alpha(p)$ is closed to $1$ when $p$ is in the proximity of some i.i.d. model. However, since the set of i.i.d. models is closed, there should be several i.i.d. models close to $p$. The following result identifies an affine space of i.i.d. models that are in the proximity of $p$.

\begin{theorem}
\label{thm:basic}
For each $p\in\P$, the following applies:
\begin{itemize}
\item[(a)] If $0\le\alpha\le\alpha(p)$ then there is $E\in\E$ and $M\in\P$ such that $p=\alpha\cdot E+(1-\alpha)\cdot M$.
\item[(b)] $\alpha(p)=\sup\left.\big\{\alpha\in[0,1]\,\right|\,(\exists E\in\E)(\exists M\in\P)\!\!: p=\alpha\!\cdot\!E+(1-\alpha)\!\cdot\!M\big\}$.
\item[(c)] Assume that $E\in\E$ and $M\in\P$ are such that $p=\alpha(p)\cdot E+\big(1-\alpha(p)\big)\cdot M$.

If $\alpha(p)<1$ then $\alpha(M)=0$ i.e. $M$ has a zero in each column.

If $\alpha(p)>0$ then $E(i,j)=\frac{1}{\alpha(p)}\cdot\min\limits_s p(s,j)$.

\end{itemize}
\end{theorem}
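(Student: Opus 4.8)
The plan is to handle all three parts through a single explicit decomposition. Throughout, I would write $c_j=\min_s p(s,j)$, so that $\alpha(p)=\sum_j c_j$, and note at the outset that $\alpha(p)\in[0,1]$ because $\sum_j c_j\le\sum_j p(i,j)=1$ for any fixed row $i$. For part (a) I would first dispose of the degenerate cases: $\alpha=0$ (take $M=p$ and any $E\in\E$) and $\alpha(p)=0$ (which forces $\alpha=0$). Assuming then $0<\alpha\le\alpha(p)$, the natural candidate is the i.i.d. model $E$ all of whose rows equal the vector $e_j:=c_j/\alpha(p)$; this is a genuine probability vector precisely because $\sum_j c_j=\alpha(p)$. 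Setting $M:=(1-\alpha)^{-1}(p-\alpha E)$ when $\alpha<1$, the row sums of $M$ are automatically $1$, so the only real check is nonnegativity: $M(i,j)\ge0$ reduces to $p(i,j)\ge\alpha\,c_j/\alpha(p)$, which holds since $p(i,j)\ge c_j$ and $\alpha/\alpha(p)\le1$. The boundary $\alpha=1$ can only occur when $\alpha(p)=1$, i.e.\ $p\in\E$, where $p=E$ trivially.

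Part (b) then follows from (a) together with a matching upper bound. Taking $\alpha=\alpha(p)$ in (a) exhibits a decomposition with that value, so the supremum is at least $\alpha(p)$ and is in fact attained. Conversely, for any decomposition $p=\alpha E+(1-\alpha)M$ with $E$ having identical rows $e$, I would observe $p(i,j)=\alpha e_j+(1-\alpha)M(i,j)\ge\alpha e_j$ for every $i$; minimizing over $i$ gives $c_j\ge\alpha e_j$, and summing over $j$ yields $\alpha(p)=\sum_j c_j\ge\alpha\sum_j e_j=\alpha$. Hence the supremum equals $\alpha(p)$.

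For part (c) the engine is one identity. Writing $e$ for the common row of $E$ and $m_j:=\min_i M(i,j)$, the point is that the term $\alpha(p)e_j$ is independent of $i$, so the minimization passes through the convex combination: $c_j=\min_i p(i,j)=\alpha(p)e_j+(1-\alpha(p))m_j$. Summing over $j$ and using $\sum_j e_j=1$ collapses everything to $(1-\alpha(p))\alpha(M)=0$, which immediately gives $\alpha(M)=0$ whenever $\alpha(p)<1$. For the formula for $E$, if $\alpha(p)=1$ then $p=E$ has identical rows and $c_j=e_j$; if $0<\alpha(p)<1$ then $\alpha(M)=0$ forces every $m_j=0$, so the identity reduces to $c_j=\alpha(p)e_j$, that is, $E(i,j)=c_j/\alpha(p)$.

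I do not anticipate a genuine obstacle, since the content is elementary linear algebra with careful bookkeeping of the boundary cases $\alpha\in\{0,1\}$ and $\alpha(p)\in\{0,1\}$. The two steps requiring attention are the nonnegativity verification of $M$ in (a) and the observation in (c) that the row-minimum distributes across the convex combination \emph{only} because the $E$-component is row-independent; this is exactly what converts the scalar relation $\sum_j c_j=\alpha(p)$ into the structural conclusions about $M$ and $E$.
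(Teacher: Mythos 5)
Your proof is correct. For parts (a) and (b) you follow essentially the paper's route: the same extremal i.i.d.\ model with rows $e_j=\min_s p(s,j)/\alpha(p)$, and the same superadditivity bound for the converse direction (you spell out $\min_i p(i,j)\ge\alpha e_j$ and sum, where the paper compresses this to $\alpha(p)\ge\alpha\cdot\alpha(E)=\alpha$); your only cosmetic deviation in (a) is defining $M=(1-\alpha)^{-1}(p-\alpha E)$ directly for each $\alpha\le\alpha(p)$ and verifying nonnegativity, whereas the paper decomposes once at $\alpha=\alpha(p)$ and then folds the excess $(\alpha(p)-\alpha)E$ into the stochastic remainder $Q$. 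Part (c) is where you genuinely diverge, and your argument is the cleaner of the two. The paper proves $\alpha(M)=0$ by bootstrapping its own machinery: it applies part (a) to $M$, obtaining $p=\alpha(p)E+(1-\alpha(p))\alpha(M)E'+(1-\alpha(p))(1-\alpha(M))M'$, and then invokes part (b) to force $\alpha(p)\ge\alpha(p)+(1-\alpha(p))\alpha(M)$; it subsequently recovers the formula for $E$ from the zero in each column of $M$ (a row $s$ with $M(s,j)=0$ pins $\alpha(p)E(s,j)=p(s,j)$, and $\alpha(p)E(i,j)\le p(i,j)$ together with the constancy of $E(i,j)$ in $i$ does the rest). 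You instead use the single identity $\min_i p(i,j)=\alpha(p)e_j+(1-\alpha(p))\min_i M(i,j)$, valid precisely because the $E$-term is independent of $i$, which upon summing over $j$ yields $(1-\alpha(p))\alpha(M)=0$, and, once each $\min_i M(i,j)=0$, yields $E(i,j)=\min_s p(s,j)/\alpha(p)$ in the same stroke. Your computation is more elementary and self-contained, delivering both conclusions of (c) from one identity; the paper's version buys a demonstration that parts (a) and (b) operate as a reusable calculus for Doeblin decompositions, the same mechanism it exploits again in Corollary~\ref{cor:sublinear}.
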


\begin{proof}
Define $\beta=\alpha(p)$. We first show part (a) in the theorem, for which we may assume without loss of generality that $\beta>0$. In this case, all reduces to prove that there is $E\in\E$ and $M\in\P$ such that
\begin{equation}
\label{ide:part a}
p=\beta\cdot E+(1-\beta)\cdot M.
\end{equation}
Indeed, if $0\le\alpha\le\beta$ then the above implies that $p=\alpha E+(\beta-\alpha) E+(1-\beta) M=\alpha E+(1-\alpha) Q$, for some $Q\in\P$, because 
the matrix $(\beta-\alpha) E+(1-\beta) M$ has nonnegative entries and the sum of the entries in each of its rows is $(1-\alpha)$. To prove the above identity, consider the matrix $E\in\E$ with entries $E(i,j)=\min_s p(s,j)/\beta$. Since $\beta E(i,j)\le p(i,j)$, the matrix $(p-\beta E)$ has nonnegative entries and row sums equal to $(1-\beta)$. In particular, if $\beta=1$ then $p=E$ and the above identity holds with any choice of $M$. Otherwise, it suffices to select $M=(p-\beta E)/(1-\beta)$. This shows (\ref{ide:part a}) and completes the proof of part (a).

To show part (b), notice that if $0\le\alpha\le1$ is such that $p=\alpha E+(1-\alpha) M$, with $E\in\E$ and $M\in\M$, then $\beta=\alpha(p)\ge\alpha\cdot\alpha(E)=\alpha$. Part (b) is now a direct consequence of part (a).

Finally we show part (c). Thus assume that $E\in\E$ and $M\in\P$ such that $p=\beta E+(1-\beta) M$. If $\beta=1$ then $p=E$ and the identity $E(i,j)=\min_s p(s,j)/\beta$ is trivial. On the other hand, if $\beta=0$ then $p$ must have a zero in each column and $M=p$. Without loss of generality we may therefore assume that $0<\beta<1$. We first show that $\alpha(M)=0$. Set $\alpha'=\alpha(M)$. Due to part (a), there exists $E'\in\E$ and $M'\in\P$ such that $p=\beta E+(1-\beta)\alpha' E'+(1-\beta)(1-\alpha')M'$. Hence $\beta=\alpha(p)\ge(\beta+(1-\beta)\alpha')$ and as a result $\alpha'=0$. To complete the proof of the theorem, fix $j$ and notice that $\beta E(i,j)=p(i,j)-(1-\beta)M(i,j)$. In particular, since $M$ has a zero in each column, there is $s$ such that $\beta E(s,j)= p(s,j)$. Finally, since $\beta E(i,j)\le p(i,j)$, we conclude that $\beta E(i,j)=\min_s p(s,j)$. This completes the proof.
\end{proof}

Due to part (a) in the previous theorem, for all $p\in\P$ there is $E\in\E$ and $M\in\P$ such that:
\[(p-E)=(1-\alpha(p))\cdot(M-E).\] 
In particular, the smaller $(1-\alpha(p))$, the closer $p$ is to an i.i.d. model. According to the following result, when one multiplies two or more stochastic matrices, one can only get ``closer'' to the set of i.i.d. models. This is the key ingredient for our proof of Doeblin's characterization of weak ergodicity in the following section.

\begin{corollary}
\label{cor:sublinear}
$\big(1-\alpha(pq)\big)\le\big(1-\alpha(p)\big)\cdot(1-\alpha(q)\big)$, for all $p,q\in\P$.
\end{corollary}
\begin{proof}
Define $\alpha_1=\alpha(p)$ and $\alpha_2=\alpha(q)$. Due to part (a) in Theorem~\ref{thm:basic}, there are matrices $E_1,E_2\in\E$ and $M_1,M_2\in\P$ such that $p=\alpha_1 E_1+(1-\alpha_1) M_1$ and $q=\alpha_2 E_2+(1-\alpha_2) M_2$. In particular, since $p E_2=E_2$, we see that $pq=\alpha_2 E_2+\alpha_1(1-\alpha_2) E_1M_2+(1-\alpha_1)(1-\alpha_2) M_1M_2$. But notice that $E_1M_2\in\E$. Consequently, the rows of the matrix $\alpha_2 E_2+\alpha_1(1-\alpha_2) E_1M_2$ are identical, with common row sum $(\alpha_1+\alpha_2-\alpha_1\alpha_2)$. As a result, there is $E_3\in\E$ such that
\begin{eqnarray*}
pq &=& (\alpha_1+\alpha_2-\alpha_1\alpha_2)\cdot E_3+(1-\alpha_1)(1-\alpha_2)\cdot M_1M_2;\\
&=& (1-(1-\alpha_1)(1-\alpha_2))\cdot E_3+(1-\alpha_1)(1-\alpha_2)\cdot M_1M_2.
\end{eqnarray*}
Finally, due to part (b) in Theorem~\ref{thm:basic}, it follows from the above that
\[\alpha(pq)\ge1-(1-\alpha_1)(1-\alpha_2),\]
which proves the corollary.
\end{proof}

%%%
%%%
\subsection{\bf A first principles proof of Doeblin's characterization of weak ergodicity.}
\label{subsec:Doeblin proof}
%%%
%%%
As we mentioned earlier, Doeblin's proof of his own characterization of weak ergodicity is matter of speculation. Though it is possible to prove that (\ref{def:weakerg}) and (\ref{thm:Doeblin}) are equivalent using Theorem~1 in~\cite{Sen73b} and Corollary~\ref{cor:sublinear}, here we venture an alternative and more elementary proof of this fact. For this fix an integer $m\ge0$ and let $\alpha_n$ denote the Doeblin's ergodicity coefficient of $\prod_{k=m}^np_k$. Due to parts (a) and (c) in Theorem~\ref{thm:basic}, there are matrices $E_n\in\E$ and $M_n\in\P$ such that:
\[\prod_{k=m}^np_k=\alpha_n\cdot E_n+(1-\alpha_n)\cdot M_n,\hbox{ with }\alpha(M_n)=0.\]
In particular, for all $i,j,s\in S$ the following holds:
\begin{equation}
\label{ide:suff Doeblin}
\big(\prod\limits_{k=m}^np_k\big)(i,s)-\big(\prod\limits_{k=m}^np_k\big)(j,s)=(1-\alpha_n)\cdot\big(M_n(i,s)-M_n(j,s)\big).
\end{equation}

Assume first that condition (\ref{thm:Doeblin}) holds. Consider the sets of non-negative integers:
\begin{eqnarray*}
I_n&=&\{k\mid\exists\,j\hbox{ such that $m\le n_j\le k\le n_{j+1}\le n$}\};\\
J_n&=&\{j\mid\exists\,k\in I_n\hbox{ such that }n_j\le k\le n_{j+1}\}.
\end{eqnarray*}
Notice that $J_n$ is an interval of integers. Furthermore, there are stochastic matrices $L$ and $R_n$ such that $\prod_{k=m}^np_k=L\cdot\big(\prod_{k\in I_n}p_k\big)\cdot R_n$. In particular, due to Corollary~\ref{cor:sublinear}, we find that
\[(1-\alpha_n)\le\Big(1-\alpha\big(\prod_{k\in I_n}p_k\big)\Big)\le\prod_{j\in J_n}\Big(1-\alpha\big(\prod_{k=n_j}^{n_{j+1}-1}p_k\big)\Big).\]
Since $(1-x)\le\exp(-x)$, for $0\le x\le1$, the condition in (\ref{thm:Doeblin}) implies that $\lim\limits_{n\to\infty}\alpha_n=1$. Back in (\ref{ide:suff Doeblin}), since each $M_n$ is a stochastic matrix, we conclude that
\[\lim_{n\to\infty}\big(\prod\limits_{k=m}^np_k\big)(i,s)-\big(\prod\limits_{k=m}^np_k\big)(j,s)=0.\]
This shows that condition (\ref{def:weakerg}) is also satisfied i.e. $(p_k)_{k\ge0}$ is weakly ergodic.

Conversely, assume that condition (\ref{def:weakerg}) holds. To show that condition (\ref{thm:Doeblin}) also applies, we first prove that $(\alpha_n)_{n\ge0}$ has a subsequence that converges to one. We show this by contradiction. In particular, due to the identity in (\ref{ide:suff Doeblin}), it applies that
\[\lim_{n\to\infty}\big(M_n(i,s)-M_n(j,s)\big)=0,\]
for all $i,j,s\in S$. Fix $s_1\in S$. Since each $M_n$ has at least one zero in the column associated with $s_1$ then there is $j_1\in S$ and a subsequence $(n_k')_{k\ge0}$ such that $M_{n_k'}(j_1,s_1)=0$ for all $k\ge0$. Therefore, $M_{n_k'}(i,s_1)\to0$ as $k\to\infty$, for all $i\in S$. Now fix $s_2\in S\setminus\{s_1\}$. Since each $M_{n_k'}$ has at least one zero in the column associated with $s_2$ then there is a subsequence $(n_k'')_{k\ge0}$ of $(n_k')_{k\ge0}$ such that $M_{n_k''}(i,s_1)\to0$ and $M_{n_k''}(i,s_2)\to0$ as $k\to\infty$, for all $i\in S$. Since $S$ is finite, a straightforward inductive argument shows that there is a subsequence $(n_k)_{k\ge0}$ such that
\[\lim_{k\to\infty} M_{n_k}(i,s)=0,\]
for all $i,s\in S$. However, the above is not possible because each $M_{n_k}$ is a stochastic matrix. As a result, $(\alpha_n)_{n\ge0}$ must have a subsequence that converges to one.

The previous argument shows that if $(p_k)_{k\ge0}$ is weakly ergodic then, for all $m\ge0$, there is $n\ge m$ such that e.g. $\alpha(\prod_{k=m}^n p_k)\ge1/2$. From this, condition (\ref{thm:Doeblin}) is immediate and we have proved that conditions (\ref{def:weakerg}) and (\ref{thm:Doeblin}) are equivalent.

%%%
%%%
\section{Occupancy distributions of homogeneous chains.}
\label{sec:occupancy}
%%%
%%%

In this section we retake our original motivation of approximating occupancy distributions in finite homogeneous Markov chains. For this notice that all the complexity associated with computing or approximating occupancy distributions is due to the distributional dependence between consecutive transitions of $X=(X_i)_{i\ge0}$. Our next result yields a stochastic equivalent of $X$ based on Doeblin's ergodicity coefficient that breaks (at random times) this dependence. To state the result, assume that:
\begin{equation}
\label{ide:palpha}
p=\alpha\cdot E+(1-\alpha)\cdot M,
\end{equation}
for certain $0\le\alpha\le1$, $E\in\E$ and $M\in\P$, and denote as $\mathbf{e}$ any of the rows of $E$.

\begin{theorem}
\label{thm:equivalent}
Assume that condition (\ref{ide:palpha}) is satisfied. Imagine a coin that shows $E$ with probability $\alpha$ and $M$ with probability $(1-\alpha)$ when tossed. The stochastic sequence $Y=(Y_i)_{i\ge0}$ defined as follows:
\begin{itemize}
\item[(i)] $Y_0$ has distribution $\mu$, and
\item[(ii)] for each $i\ge0$, the distribution of $Y_{i+1}$ conditioned on $(Y_0,\ldots,Y_i)$ is given by the following procedure: toss the coin, and if the $E$-side comes up then draw $Y_{i+1}$ using the distribution $\mathbf{e}(\cdot)$, else draw $Y_{i+1}$ using the distribution $M(Y_i,\cdot)$,
\end{itemize}
has the same distribution as $X$.
\end{theorem}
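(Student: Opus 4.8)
The plan is to verify directly that $Y$ is a first-order homogeneous Markov chain with initial distribution $\mu$ and one-step transition matrix $p$; since $X$ is characterized by exactly these two objects, equality of the two laws follows immediately. The only point requiring work is to compute the conditional law of $Y_{i+1}$ given the past and to recognize it as $p(Y_i,\cdot)$.

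First I would make the coin tosses explicit on the underlying probability space. I introduce an i.i.d. family of ``coins'' $(C_i)_{i\ge1}$, independent of $Y_0$, with $\Prob[C_i=E]=\alpha$ and $\Prob[C_i=M]=1-\alpha$, and I posit that, conditionally on $(Y_0,\dots,Y_i)$ together with $C_{i+1}$, the variable $Y_{i+1}$ is drawn from $\mathbf{e}(\cdot)$ when $C_{i+1}=E$ and from $M(Y_i,\cdot)$ when $C_{i+1}=M$. This is precisely the mechanism described in (ii), and encoding it this way makes rigorous the implicit assumption that the coin toss performed at step $i+1$ is independent of the history $(Y_0,\dots,Y_i)$.

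The key step is then a short computation. For any $s\in S$, conditioning on the coin $C_{i+1}$ and using its independence of the past gives
\[
\Prob[Y_{i+1}=s\mid Y_0,\dots,Y_i]=\alpha\cdot\mathbf{e}(s)+(1-\alpha)\cdot M(Y_i,s).
\]
Here the hypothesis $E\in\E$ enters crucially: because every row of $E$ equals $\mathbf{e}$, we have $\mathbf{e}(s)=E(Y_i,s)$ irrespective of the value taken by $Y_i$. Substituting this and invoking the decomposition (\ref{ide:palpha}) yields
\[
\Prob[Y_{i+1}=s\mid Y_0,\dots,Y_i]=\alpha\cdot E(Y_i,s)+(1-\alpha)\cdot M(Y_i,s)=p(Y_i,s).
\]
In particular the conditional law depends on the past only through $Y_i$, so $Y$ satisfies the Markov property with transition matrix $p$, and $Y_0$ has law $\mu$ by (i).

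To finish, I would confirm that the two laws coincide by matching finite-dimensional distributions: a routine induction on $n$, peeling off the last coordinate via the conditional probability just obtained, shows that for every $n$ and every $(s_0,\dots,s_n)$,
\[
\Prob[Y_0=s_0,\dots,Y_n=s_n]=\mu(s_0)\prod_{i=1}^n p(s_{i-1},s_i),
\]
which is exactly $\Prob[X_0=s_0,\dots,X_n=s_n]$. I do not anticipate a genuine obstacle: the entire substance of the theorem is the algebraic identity $\alpha\cdot E+(1-\alpha)\cdot M=p$ combined with the row-homogeneity of $E$, and the only care needed is in setting up the coin variables so that the conditioning argument is fully rigorous rather than merely heuristic.
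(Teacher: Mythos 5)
Your proposal is correct and follows essentially the same route as the paper: both compute the conditional law of $Y_{i+1}$ given the past, use the row-homogeneity of $E$ together with the decomposition (\ref{ide:palpha}) to identify it with $p(Y_i,\cdot)$, and conclude that $Y$ is a homogeneous Markov chain with initial distribution $\mu$ and transition matrix $p$. Your extra steps---making the coin variables $(C_i)$ explicit and matching finite-dimensional distributions by induction---are merely a more detailed write-up of what the paper leaves implicit, not a different argument.
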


\begin{proof}
Due to the definition of the $Y$ process, for each $i\ge0$ and $s_0,\ldots,s_{i+1}\in S$, the following applies:
\begin{eqnarray*}
\Prob(Y_{i+1}=s_{i+1}\mid Y_0=s_0,\ldots,Y_i=s_i)&=&\alpha\cdot e(s_{i+1})+(1-\alpha)\cdot M(s_i,s_{i+1}),\\
&=&\alpha\cdot E(s_i,s_{i+1})+(1-\alpha)\cdot M(s_i,s_{i+1})=p(s_i,s_{i+1}).
\end{eqnarray*}
In particular, $Y$ is a first-order homogeneous Markov chain with initial distribution $\mu$ and probability transition matrix $p$. Hence $X$ and $Y$ have the same distribution.\end{proof}

Next, we show how to exploit the random times at which the imaginary coin of the theorem breaks the dependence between consecutive transitions of the $Y$-chain. The first application gives a non-standard argument for the strong ergodicity of irreducible and aperiodic Markov chains. In the second application, we refine this argument to obtain low-to-moderate complexity approximations of occupancy distributions.

For what follows, recall that the \emph{total variation distance} between two probability distributions $u(\cdot)$ and $v(\cdot)$ supported over $\mathbb{N}=\{0,1,\ldots\}$ is defined as:
\[\|u-v\|\stackrel{def}{=}\sup_{A\subset\mathbb{N}}|u(A)-v(A)|=\frac{1}{2}\sum_{i\in\mathbb{N}}|u(i)-v(i)|,\]
where $u(i)$ and $v(i)$ denote $u(\{i\})$ and $v(\{i\})$, respectively. Accordingly, the total variation distance between two $\mathbb{N}$-valued random variables $U$ and $V$ is defined as the total variation distance of their distributions and it is denoted $\|U-V\|$.

%%%
%%%
\subsection{Connections with strong ergodicity.}
\label{subsec:strong ergodicity}
%%%
%%%

It is well-known that if $p$ is irreducible and aperiodic then there is a unique stationary distribution i.e. a unique probability distribution $\pi$ such that $\pi p=\pi$. In this case, there are constants $c_0,c_1>0$ which depend on $p$ but not on $\mu$, such that:
\begin{equation}
\label{ide:stationarity}
\|X_n-\pi\|\le c_0e^{-c_1\cdot n},\hbox{ for all $n\ge0$}.
\end{equation}
In particular, the distribution of $X_n$ is asymptotically independent of $n$. Using Theorem~\ref{thm:equivalent}, one may alternatively explain this phenomena as follows. If $\alpha(p)>0$ then there is a distribution $\mathbf{e}$ such that, regardless of the state where the chain is located, the next state will be picked up from this distribution with probability $\alpha(p)$. Each time this distribution is used, any information about the states previously visited by the chain is lost. This distribution acts therefore as a ``memory-breaker''. When $n$ is large, and even if $\alpha(p)$ is small, it is unlikely that no memory-breaker occurred between $Y_0$ and $Y_n$. Since all transitions after the last memory-breaker where controlled by $M$, the distribution of $Y_n$ should be well-approximated by a mixture of distributions of the form $\mathbf{e}M^t$. This intuition is made precise on the following result. Due to part (b) in Theorem~\ref{thm:basic}, notice that the optimal choice for $\alpha$ is $\alpha(p)$.

\begin{corollary}
\label{cor:stationarity}
\cite{Che10} Assume that condition (\ref{ide:palpha}) is satisfied. Let $m\ge0$ and consider $S$-valued random variables $Z_0,\ldots,Z_m$ such that $Z_t$ has distribution $\mathbf{e}M^t$. If $I$ is a random index independent of $(Z_1,\ldots,Z_m)$ such that $\Prob[I=t]=\alpha(1-\alpha)^t/(1-(1-\alpha)^{m+1})$, for $0\le t\le m$; in particular, $\Prob[I\in\{0,\ldots,m\}]=1$, then
\[\|X_n-Z_I\|\le(1-\alpha)^{m+1},\hbox{ for all $n>m$}.\]
In particular, if $\alpha>0$ and $p$ is irreducible and aperiodic then $\pi=\alpha\,\mathbf{e}\,(\mathbb{I}-\big(1-\alpha)M\big)^{-1}$ and
\[\|X_n-\pi\|\le2(1-\alpha)^n,\hbox{ for all $n\ge1$}.\]
\end{corollary}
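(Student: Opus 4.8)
The plan is to realise $X$ through the equivalent chain $Y$ of Theorem~\ref{thm:equivalent} and to read off $Y_n$ from the coin tosses that drive it. Write $B_i\in\{E,M\}$ for the outcome of the coin used to produce $Y_i$ from $Y_{i-1}$; these are i.i.d.\ with $\Prob[B_i=E]=\alpha$, and whenever a coin shows $E$ the next state is drawn from $\mathbf{e}$ irrespective of the past (a ``memory-breaker''). Fix $n>m$ and let $G$ be the event that at least one of the coins $B_{n-m},\dots,B_n$ shows $E$; by independence $\Prob[G^c]=(1-\alpha)^{m+1}$. On $G$, let $\tau\in\{0,\dots,m\}$ be the number of steps back from $n$ to the most recent memory-breaker; since $\tau\le m<n$ we have $n-\tau\ge1$, so that $E$-coin genuinely exists.

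Conditioned on $\{\tau=t\}\cap G$, the state $Y_{n-t}$ is drawn from $\mathbf{e}$ and the ensuing $t$ transitions are governed by $M$, so $Y_n$ has law $\mathbf{e}M^t$, the law of $Z_t$. Counting coins gives $\Prob[\tau=t,\,G]=\alpha(1-\alpha)^t$ for $0\le t\le m$, hence $\Prob[\tau=t\mid G]=\alpha(1-\alpha)^t/\big(1-(1-\alpha)^{m+1}\big)=\Prob[I=t]$, so the conditional law of $Y_n$ given $G$ is exactly the law of $Z_I$. I would then couple $Y_n$ and $Z_I$ by setting $Z_I:=Y_n$ on $G$ (legitimate because the conditional laws agree) and drawing $(I,Z_I)$ from its prescribed joint law on $G^c$; then $Y_n=Z_I$ on $G$, so
\[\|X_n-Z_I\|=\|Y_n-Z_I\|\le\Prob[Y_n\neq Z_I]\le\Prob[G^c]=(1-\alpha)^{m+1},\]
which is the first inequality. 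One could equally split $|\Prob[Y_n\in A]-\Prob[Z_I\in A]|$ over $G$ and $G^c$ and bound it directly.

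For the second claim assume $\alpha>0$. As $M$ is stochastic its spectral radius equals $1$, so $(1-\alpha)M$ has spectral radius $\le1-\alpha<1$ and $\Id-(1-\alpha)M$ is invertible with $(\Id-(1-\alpha)M)^{-1}=\sum_{t\ge0}(1-\alpha)^tM^t$. Thus $\pi:=\alpha\,\mathbf{e}\,(\Id-(1-\alpha)M)^{-1}=\sum_{t\ge0}\alpha(1-\alpha)^t\,\mathbf{e}M^t$ is the $m\to\infty$ limit of the law of $Z_I$. I would check that $\pi$ is a probability vector from $M\one=\one$ (giving $(\Id-(1-\alpha)M)^{-1}\one=\alpha^{-1}\one$ and $\pi\one=1$) and that it is stationary: with $p=\alpha E+(1-\alpha)M$ and $\pi E=(\pi\one)\,\mathbf{e}=\mathbf{e}$ one finds $\pi p=\alpha\mathbf{e}+(1-\alpha)\pi M$, which equals $\pi$ because $\pi(\Id-(1-\alpha)M)=\alpha\mathbf{e}$. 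Irreducibility and aperiodicity then make $\pi$ the unique stationary distribution.

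Finally I would take $m=n-1$, so that $n>m$ holds for every $n\ge1$ and the first part gives $\|X_n-Z_I\|\le(1-\alpha)^n$. Writing the law of this $Z_I$ as $\tfrac1W\sum_{t=0}^{m}\alpha(1-\alpha)^t\mathbf{e}M^t$ with $W=1-(1-\alpha)^{m+1}$, the signed measure $\pi-\mathrm{law}(Z_I)$ has positive and negative parts of equal mass $1-W$, so $\|Z_I-\pi\|\le(1-\alpha)^{m+1}=(1-\alpha)^n$; the triangle inequality then yields $\|X_n-\pi\|\le2(1-\alpha)^n$. The step I expect to demand the most care is the conditioning bookkeeping: pinning down the most recent memory-breaker, checking $n-\tau\ge1$ at the boundary, and confirming that the unconditional weights $\alpha(1-\alpha)^t$ renormalise to precisely the prescribed law of $I$ so that the coupling is valid. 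The stationarity identity and the truncation estimate are then routine matrix and total-variation computations.
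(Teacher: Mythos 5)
Your proof is correct and takes essentially the approach the paper intends: the paper states this corollary without proof (citing \cite{Che10}), but the discussion immediately preceding it sketches exactly your argument---condition on the most recent ``memory-breaker'' among the last $m+1$ coin tosses of the equivalent chain from Theorem~\ref{thm:equivalent}, observe that this event has the geometric weights $\alpha(1-\alpha)^t$ matching the law of $I$ and produces the conditional law $\mathbf{e}M^t$, and couple away the residual event of probability $(1-\alpha)^{m+1}$. Your bookkeeping all checks out, including the boundary check $n-\tau\ge1$, the Neumann-series identification of $\pi$ together with its stationarity, and the truncation-plus-triangle-inequality step with $m=n-1$ yielding the factor $2(1-\alpha)^n$.
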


To fix ideas, consider the probability transition matrix:
\begin{equation}
\label{ide:exa:p}
p=\left[\begin{array}{ccc}
\frac{3}{10}&0&\frac{7}{10}\vspace{3pt}\\
0&\frac{9}{10}&\frac{1}{10}\vspace{3pt}\\
\frac{4}{5}&\frac{1}{5}&0\end{array}\right].
\end{equation}
Since $\alpha(p)=0$, the inequalities of the corollary are trivial. However, observe that:
\[p^2=\left[\begin{array}{ccc}
{\frac {13}{20}}&{\frac {7}{50}}&{\frac {21}{100}}\vspace{3pt}\\
{\frac {2}{25}}&{\frac {83}{100}}&{\frac {9}{100}}\vspace{3pt}\\
{\frac {6}{25}}&{\frac {9}{50}}&{\frac {29}{50}}\end {array} \right]=\frac{31}{100}\cdot E_2+\frac{69}{100}\cdot M_2,\]
with
\[E_2:=\left[ \begin {array}{ccc}
{\frac {8}{31}}&{\frac {14}{31}}&{\frac {9}{31}}\vspace{3pt}\\
{\frac {8}{31}}&{\frac {14}{31}}&{\frac {9}{31}}\vspace{3pt}\\
{\frac {8}{31}}&{\frac {14}{31}}&{\frac {9}{31}}\end {array} \right]\hbox{ and }\,
M_2:=\left[ \begin {array}{ccc}
{\frac {19}{23}}&0&{\frac {4}{23}}\vspace{3pt}\\
0&1&0\vspace{3pt}\\
{\frac {16}{69}}&{\frac {4}{69}}&{\frac {49}{69}}\end {array} \right].\]

Notice that $\alpha_2:=\alpha(p^2)=31/100$; in particular, the above decomposition of $p^2$ is a direct consequence of part (c) in Theorem~\ref{thm:basic}. Define $\mathbf{e}_2$ as the first row of $E_2$. Imagine you would like to approximate the distribution of some $X_n$, with as few matrix multiplications as possible, and within a $5\%$ accuracy in total variation distance. Define $\epsilon:=0.05$. Due to the Corollary~\ref{cor:stationarity}, this is possible for any even number $n\ge18$, by considering a mixture of the distributions $\mathbf{e}_2\,M_2^t$, with $t=0,\ldots,8$. On the other hand, because Markovian kernels are contractive in total variation distance~\cite{Mar06}, this is also possible for any odd number $n\ge18$ by considering a mixture of the distributions $\mathbf{e}_2\,M_2^t\,p$, with $t=0,\ldots,8$. Either mixture can be computed in at most $10$ matrix multiplications, however, as seen in Table~\ref{table:stationary}, this number can be optimized by considering larger powers of $p$. Indeed, it is possible to approximate within $\epsilon$-units the distribution of each $X_n$, with $n\ge16$, using a mixture of three distributions associated with Doeblin's ergodicity coefficient of $p^4$. This mixture is given by:
\begin{equation}
\label{ide:ZIp4}
\sum_{t=0}^3\frac{(1-\alpha_4)^t-(1-\alpha_4)^{t+1}}{1-(1-\alpha_4)^4}\cdot \mathbf{e}_4\cdot M_4^t\cdot p^{n(\!\!\!\!\!\!\mod 4)},
\end{equation}
which can be computed using $7$ matrix multiplications. In retrospect, this is far from obvious. For instance, using a computer algebra, one finds that:
\[p^7=\left[\begin {array}{ccc} 
0.3444507000& 0.3440640000& 0.3114853000\\
0.1966080000& 0.6114381000& 0.1919539000\\
0.3559832000& 0.3839078000& 0.2601090000
\end {array} \right].\]
Since $\max_{i,j}\|p^7(i,\cdot)-p^7(j,\cdot)\|\ge0.21$, the chain is still far from its stationary distribution even after 7 transitions. Indeed, $\max_i\|p^7(i,\cdot)-\pi(\cdot)\|\ge 0.13$, exceeding the total variation distance between any $X_n$, with $n\ge16$, and the distribution in (\ref{ide:ZIp4}).

\begin{table}[h!]
\begin{center}
\begin{small}
\begin{tabular}{ccccc}
$k$ & $\alpha_k$ & $m_k$ & $n_k$ & $c_k$\\
\hline
$1$ & 0 & $\infty$ & $\infty$ & $\infty$ \\
$2$ & 0.31 & 8 & 18 & 10\\
$3$ & 0.403 & 5 & 18 & 8\\
$4$ & 0.5287 & 3 & 16 & 7\\
$5$ & 0.63234 & 3 & 20 & 8\\
$6$ & 0.758471 & 2 & 18 & 8\\
$7$ & 0.857157 & 2 & 21 & 9\\
%$8$ & 0.92319959 & 1 & 16 & 9\\
%$9$ & 0.93796191 & 1 & 18 & 10\\
%$10$ & 0.9427713551 & 1 & 20 & 11
\end{tabular}
\end{small}
\end{center}
\caption{Parameters associated with powers of the probability transition matrix in (\ref{ide:exa:p}). Here $\alpha_k=\alpha(p^k)$ and $p^k=\alpha_kE_k+(1-\alpha_k)M_k$, with $E_k\in\E$ and $M_k\in\P$. In addition, $m_k:=\lceil\ln(\epsilon)/\ln(1-\alpha_k)-1\rceil$, with $\epsilon:=0.05$, and $n_k:=k(m_k+1)$. Due to Corollary~\ref{cor:stationarity}, if $\mathbf{e}_k$ denotes any of the rows of $E_k$ then, for each $n\ge n_k$, there exists a mixture of the distributions $\mathbf{e}_k\,M_k^t\,p^{n(\!\!\!\!\mod k)}$; $t=0,\ldots,m_k$, which approximates the distribution of $X_n$ within $\epsilon$-units in total variation distance. This mixture can be computed with at most $c_k:=(k+m_k)$ matrix multiplications.}
\label{table:stationary}
\end{table}

%%%
%%%
\subsection{Approximation of occupancy distributions.}
\label{subsec:occupancy}
%%%
%%%
Assume that condition (\ref{ide:palpha}) is satisfied. Following the notation of Theorem~\ref{thm:equivalent}, the occupancy distribution of a set $T\subset S$ is the distribution of the random variable:
\[T_n=\sum_{t=1}^n[\![Y_t\in T]\!].\]
The moment generating function (m.g.f.) of $T_n$ is given by $\mu\cdot\{p(z)\}^n\cdot\mathbf{1}$, where $p(z)$ is the matrix with polynomial entries given by $p(z)(i,j)=p(i,j)\cdot z^{[\![j\in T]\!]}$, and $\mathbf{1}$ is a column-vector of ones. $p(z)$ is called a transfer matrix~\cite{FlaSed09}, and the computation of the exact distribution of $T_n$ is expensive unless $n$ is relatively small. In what follows, we extend the argument of the previous section to approximate this distribution.

Notice that the random variables $[\![Y_i\in T]\!]$ and $[\![Y_j\in T]\!]$, with $i<j$, are independent when at least one of the random variables $Y_{i+1},\ldots,Y_j$ is drawn from the the memory-breaker distribution $\mathbf{e}$. In particular, the times at which the $E$-side of the coin appears cut the trajectory $Y_0,\ldots,Y_n$ into independent ``pieces''. The number of such pieces is random, and consecutive transitions in each piece are governed by the matrix $M$. Furthermore, the initial distribution of each piece is $\mathbf{e}$, except for the first piece which has initial distribution $\mu$. The expected number of memory-breakers between the first and last transition is $\alpha n$; and the average separation between consecutive memory-breakers is $1/\alpha$, regardless of $n$. As a result, a mixture of $\mathbf{e}(z)\cdot\{M(z)\}^m\cdot\mathbf{1}$, with $m$ an integer in a neighborhood of $1/\alpha$, should lead to a decent approximation of the m.g.f. of the occupancy distribution of $T$ in each piece other than the first. For the first piece, the m.g.f.'s to consider are of the form $\mu\cdot\{M(z)\}^m\cdot\mathbf{1}$. Since the behavior of the Markov chain is independent from one piece to another, an approximation for the m.g.f. of $T_n$ should follow. More importantly for computations, a power of order $o(n)$ of the transfer matrix $M(z)$ should suffice for a decent approximation of the distribution of $T_n$. The weakest point of this heuristic is the probable occurrence of longer than expected pieces at already intermediate values of $n$. This motivates us to look at the random variable $L_n$ defined as the length of the longest piece. (In probabilistic terms, $L_n$ is the length of the largest run of $M$'s in $n$-tosses of the coin from Theorem~\ref{thm:equivalent}.) The asymptotic distribution of this random variable is well understood, both via combinatorial and probabilistic methods~\cite{Fel68,FlaSed09,Arr90}. Since the distribution of $L_n$ concentrates around $-\ln(\alpha n)/\ln(1-\alpha)$ as $n$ increases, selecting $m=-c\ln(\alpha n)/\ln(1-\alpha)$, for $c>1$, gives $\Prob[L_n\le m]=1+\mathcal{O}(n^{1-c})$. An explicit upper-bound for the error in total variation distance follows now from the next result. We notice that the m.g.f. of the random variable $W_I$ in the corollary can be computed explicitly via a symbolic specification~\cite{FlaSed09}.

\begin{corollary}
\label{cor:equivalent2}
\cite{Che10} Assume that condition (\ref{ide:palpha}) is satisfied.
Fix $m\geq0$ and define $\ell_n=\Prob[L_n\le m]$. Let
$I=(I_0,\ldots,I_K)$ be a random composition of $n$ such that
$\Prob[I=(i_0,i_1,\ldots,i_k)]=\alpha^k(1-\alpha)^{n-k}/\ell_n$, for
all $k\ge0$, $0\le i_0\le m$, $1\le i_l\le(m+1)$, for $l\ge1$, and such that $\sum_{l=0}^ki_l=n$. In
addition, consider independent random variables $U(l)$, $V(i,l)$ which are independent of $I$ and such that $U(l)$
has m.g.f. $\mu\cdot\{M(z)\}^l\cdot\mathbf{1}$ and $V(i,l)$ has m.g.f.
$\mathbf{e}(z)\cdot\{M(z)\}^{l-1}\cdot\mathbf{1}$. If one defines
$W_I:=U(I_0)+\sum_{l=1}^K V(l,I_l)$ then
\begin{equation}\label{eq:rvApproximation}
\|T_n - W_I\|\leq \frac{1-\ell_n}{\ell_n}.
\end{equation}
\end{corollary}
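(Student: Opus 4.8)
The plan is to reuse the coin-flip representation of Theorem~\ref{thm:equivalent} and to prove the strictly stronger statement that, \emph{conditionally} on the event $A=\{L_n\le m\}$, the occupancy $T_n$ has \emph{exactly} the law of $W_I$; the total variation estimate then drops out of an elementary mixture argument.

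First I would condition on the sequence of coin tosses $C_1,\dots,C_n\in\{E,M\}$ driving the $Y$-chain. Each toss equal to $E$ is a memory-breaker, so the positions $1\le j_1<\cdots<j_k\le n$ of the $E$'s cut $Y_0,\dots,Y_n$ into $k+1$ conditionally independent segments: the initial segment $Y_0,\dots,Y_{j_1-1}$, started from $\mu$, and the segments $Y_{j_l},\dots,Y_{j_{l+1}-1}$ (with $j_{k+1}:=n+1$), each started from $\mathbf e$, every transition inside a segment being governed by $M$. Reading off the occupancy of each segment through the transfer matrix $M(z)$ exactly as in \S\ref{subsec:occupancy}, the initial segment contributes a variable with m.g.f. $\mu\cdot\{M(z)\}^{i_0}\cdot\mathbf 1$, where $i_0=j_1-1$, and the $l$-th segment ($l\ge1$) contributes one with m.g.f. $\mathbf e(z)\cdot\{M(z)\}^{i_l-1}\cdot\mathbf 1$, where $i_l$ is the number of states it contains. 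By conditional independence, $\Esp\big[z^{T_n}\mid C\big]$ is the product of these factors and depends on $C$ only through the induced composition $(i_0,\dots,i_k)$ of $n$.

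Next I would match this with the prescribed law of $I$. The maximal runs of $M$'s have lengths $i_0$ (the initial run, which has no preceding $E$) and $i_l-1$ for $l\ge1$ (the $E$ at position $j_l$ interrupts the run), so $L_n=\max\{i_0,i_1-1,\dots,i_k-1\}$, and hence $A$ holds if and only if $0\le i_0\le m$ and $1\le i_l\le m+1$ for $l\ge1$; this explains the asymmetric constraints in the statement. Since every admissible toss sequence carries probability $\alpha^k(1-\alpha)^{n-k}$ and is in bijection with its composition, summing $\Prob[C=c]\,\Esp[z^{T_n}\mid C=c]$ over $A$ and dividing by $\ell_n=\Prob[A]$ reproduces precisely the law $\Prob[I=(i_0,\dots,i_k)]=\alpha^k(1-\alpha)^{n-k}/\ell_n$ together with the factorized m.g.f. of $W_I=U(I_0)+\sum_{l=1}^K V(l,I_l)$. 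Thus $\Esp[z^{T_n}\mid A]=\Esp[z^{W_I}]$, i.e. $W_I$ has the law of $T_n$ conditioned on $A$.

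Finally I would split the law $P$ of $T_n$ over $A$ and its complement to get $P=\ell_n\,Q+(1-\ell_n)\,R$, where $Q$ is the law of $W_I$ and $R$ the law of $T_n$ conditioned on $A^c$. Then $\|T_n-W_I\|=\|(1-\ell_n)(R-Q)\|=(1-\ell_n)\,\|R-Q\|\le 1-\ell_n\le\frac{1-\ell_n}{\ell_n}$, which proves \eqref{eq:rvApproximation} (and in fact yields the slightly sharper bound $1-\ell_n$). The main obstacle is the middle step: making the segment decomposition and the m.g.f. matching fully rigorous, in particular keeping the composition bookkeeping straight (the off-by-one between the initial $M$-run and the later ones) and verifying that conditioning on $A$ returns exactly the prescribed random composition $I$. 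Once $W_I\stackrel{d}{=}(T_n\mid A)$ is in hand, the total-variation bound is immediate.
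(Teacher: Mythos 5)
Your proof is correct and follows precisely the decomposition that the paper sketches informally in \S\ref{subsec:occupancy} before stating the corollary (the paper itself defers the formal proof to~\cite{Che10}): the $E$-tosses of Theorem~\ref{thm:equivalent} cut the trajectory into conditionally independent $M$-segments, the event $\{L_n\le m\}$ depends only on the tosses and translates exactly into the stated constraints on the composition --- your off-by-one bookkeeping between the initial run of length $i_0$ and the later runs of length $i_l-1$ is right --- so $W_I$ has exactly the law of $T_n$ conditioned on $\{L_n\le m\}$. Your final mixture step in fact delivers the sharper bound $\|T_n-W_I\|\le 1-\ell_n$, which implies the stated inequality \eqref{eq:rvApproximation} since $\ell_n\le 1$.
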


As a numerical example, we select a stationary and homogenous Markov chain from~\cite{Erh99} with state space $S=\{1,\ldots,8\}$ and probability transition matrix
\begin{equation}\label{eq:ErhardssonEx1}
p(i,j) = \left\{ \begin{array}{lcl}
\frac{1-\beta q(i,T)}{1-q(i,T)}q(i,j) &, & i\in T^c,j\in T^c;\\
\beta q(i,j) &, & i\in T^c, j\in T;\\
q(i,j) &,& i\in T, j\in S;
\end{array}\right.
\end{equation}
where
\begin{equation*}
q=\left(\begin{array}{cccccccc}
0.334 & 0.215 & 0.173 & 0.119 & 0.065 & 0.086 & 0.003 & 0.005 \\
0.289 & 0.133 & 0.211 & 0.133 & 0.067 & 0.156 & 0.007 & 0.004 \\
0.356 & 0.184 & 0.075 & 0.043 & 0.151 & 0.183 & 0.002 & 0.006 \\
0.41    & 0.162 & 0.108 & 0.075 & 0.14  & 0.097 & 0.005 & 0.003 \\
0.316 & 0.239 & 0.044 & 0.218 & 0.076 & 0.098 & 0.004 & 0.005 \\
0.44    & 0.176 & 0.044 & 0.242 & 0.088 & 0             & 0.005 & 0.005 \\
0.18    & 0.06  & 0.19  & 0.09  & 0.13  & 0.1   & 0.13  & 0.12 \\
0.2     & 0.16  & 0.07  & 0.1   & 0.14  & 0.1   & 0.09  & 0.14
\end{array}\right).
\end{equation*}
The goal is to approximate the occupancy distribution of the set $T=\{8\}$ for various values of $n$ and $\beta$. The parameter $\beta$ controls transitions to $T$, which become rare for $\beta$ small. Table \ref{tab:dTV} gives exact total variation distance errors for Normal~\cite{FlaSed09} and compound Poisson approximations~\cite{Erh99} as well as our approximation in (\ref{eq:rvApproximation}). As shown in the table, approximation~\eqref{eq:rvApproximation} gives one order of magnitude or more improvement over the compound Poisson approximation.  Furthermore, it is clear that $n=1000$ may be not large enough for an accurate Normal approximation to the occupancy distribution of $T$.
\begin{table}[h!]
\begin{center}
\begin{small}
\begin{tabular}{ccccc}
$n$ & $\beta$ & Normal Approximation & Compound Poisson Approximation &
Approximation in \eqref{eq:rvApproximation}\\
\hline
10      &       1                       &1.7E-2         &3.2E-3         &3.1E-4\\
10      &       0.5             &1.7E-2         &1.2E-3         &1.3E-4\\
10      &       0.2             &1.3E-2         &4.9E-4         &5.6E-5\\
10      &       0.1             &5.3E-3         &1.7E-4         &2.1E-5\\
10      &       0.01    &5.3E-4         &1.5E-5         &2.1E-6\\
10      &       0                       &5.3E-5         &1.5E-6         &2.1E-7\\
100     &       1                       &0.23                   &9.7E-3         &2.3E-4\\
100     &       0.5             &0.22                   &3.5E-3         &1.6E-4\\
100     &       0.25    &0.14                   &1.3E-3         &7.5E-5\\
100     &       0.1             &2.0E-2         &3.1E-4         &3.1E-5\\
100     &       0.01    &5.2E-3         &1.6E-5         &3.3E-6\\
100     &       0                       &5.3E-4         &1.5E-6         &3.3E-7\\
1000&   1                       &6.9E-2         &9.4E-3         &2.1E-5\\
1000&   0.5             &9.0E-2         &4.9E-3         &1.4E-5\\
1000&   0.25    &0.14                   &2.7E-3         &8.2E-6\\
1000&   0.1             &0.23                   &9.6E-4         &1.1E-5\\
1000&   0.01    &2.0E-2         &2.7E-5         &1.8E-6\\
1000&   0                       &5.2E-3         &1.7E-6         &2.0E-7\\
\end{tabular}
\end{small}
\caption{Total variation distance for approximations to the occupancy distribution of the set $T=\{8\}$ for the stationary chain described by \eqref{eq:ErhardssonEx1}.  The compound Poisson approximation, given by~\cite{Erh99}, is a P\'olya-Aeppli distribution.}
\label{tab:dTV}
\end{center}
\end{table}

\section*{Acknowledgements}
The authors are indebted to Emeritus Professor Eugene Seneta, who graciously provided a hardcopy of Doeblin's 1937 report.
%\verb!\acknowledgements!

%\nocite{*}
\bibliographystyle{plain}
\bibliography{biblio}
\label{sec:biblio}

\end{document}